\theoremstyle{definition} 
\newtheorem{theorem}{Theorem}[section]
\newtheorem{lemma}[theorem]{Lemma}
\newtheorem{corollary}[theorem]{Corollary}
\title[Bernoulli first-passage percolation]{Comparison of limit shapes for Bernoulli first-passage percolation}
\thanks{N.K. was supported by JSPS KAKENHI Grant Number JP20K14332. M.T. is partially supported by JSPS KAKENHI Grant Numbers JP19H01793, JP19K03514 and JP22K03333.}
\author{Naoki Kubota}
\address{College of Science and Technology, Nihon University}
\email{kubota.naoki08@nihon-u.ac.jp}
\author{Masato Takei}
\address{Department of Applied Mathematics, Faculty of Engineering, Yokohama National University, Yokohama, Japan}
\email{takei-masato-fx@ynu.ac.jp}
\begin{document}

\begin{abstract}
We consider Bernoulli first-passage percolation on the $d$-dimensional hypercubic lattice with $d \geq 2$. The passage time of edge $e$ is $0$ with probability $p$ and $1$ with probability $1-p$, independently of each other.
Let $p_c$ be the critical probability for percolation of edges with passage time $0$. When $0\leq p<p_c$, there exists a nonrandom, nonempty compact convex set $\mathcal{B}_p$ such that the set of vertices to which the first-passage time from the origin is within $t$ is well-approximated by $t\mathcal{B}_p$ for all large $t$, with probability one. The aim of this paper is to prove that for $0\leq p<q<p_c$, the Hausdorff distance between $\mathcal{B}_p$ and $\mathcal{B}_q$ grows linearly in $q-p$. Moreover, we mention that the approach taken in the paper provides a lower bound for the expected size of the intersection of geodesics, that gives a nontrivial consequence for the {\it critical} case.  
\end{abstract}

\maketitle

\section{Introduction}
\label{intro}

First-passage percolation was first introduced by Hammersley and Welsh \cite{HammersleyWelsh65}. 
In this paper, let $d \geq 2$ and consider Bernoulli first-passage percolation on the $d$-dimensional hypercubic lattice $\mathbb{L}^d = (\mathbb{Z}^d,\mathbb{E}^d)$: The family $\{ t(e) \}_{e \in \mathbb{E}^d}$ of independent random variables satisfies that
\[ P_p(t(e)=0)=p \quad \mbox{and}\quad P_p(t(e)=1)=1-p \quad \mbox{for each $e \in \mathbb{E}^d$.}\]
The expectation of a random variable $X$ with respect to $P_p$ is denoted by $E_p[X]$. For a self-avoiding path $\gamma$ from $\boldsymbol{x} \in \mathbb{Z}^d$ to $\boldsymbol{y} \in \mathbb{Z}^d$, we define  
\[
t(\gamma) := \sum_{e \in \gamma} t(e).
\]
For each $\boldsymbol{x} \in \mathbb{R}^d$, we can find a unique $\boldsymbol{x'} \in \mathbb{Z}^d$ with $\boldsymbol{x} \in \boldsymbol{x'} + [0,1)^d$. The first-passage time from $\boldsymbol{x} \in \mathbb{R}^d$ to $\boldsymbol{y} \in \mathbb{R}^d$ is defined by
\[
T(\boldsymbol{x},\boldsymbol{y}) := \inf_{\gamma : \boldsymbol{x}'\to\boldsymbol{y}'} t(\gamma).
\]
The wet region at time $t \geq 0$ is defined by 
\[
B(t):=\{ \boldsymbol{y} \in \mathbb{R}^d : T(\boldsymbol{O},\boldsymbol{y}) \leq t \},
\]
where $\boldsymbol{O}:=(0,0,\ldots,0)$. Let
\[ p_c=p_c(d) := \inf
\left\{ p \in [0,1] :
\begin{array}{@{\,}c@{\,}}
\text{there exists an infinite self-avoiding path}\\
\text{of edges $e$ with $t(e)=0$} \\
\end{array}
\right\} .
\]
It is known that when $0 \leq p<p_c$, there exists a nonrandom, nonempty compact convex set $\mathcal{B}_p$ such that for any $\varepsilon > 0$,
\begin{align}
P_p\left (
\text{$(1-\varepsilon) \mathcal{B}_p \subset \dfrac{1}{t} B(t) \subset (1+\varepsilon) \mathcal{B}_p$ for all large $t$}
\right)=1.
\label{thm:ShapeTheoremBernoulli}
\end{align}
This is called the shape theorem, a kind of law of large numbers for the wet region. For fundamental results on first-passage percolation (including the shape theorem above), refer to Kesten \cite{Kesten86LNM} and Auffinger, Damron and Hanson \cite{AuffingerDamronHanson50years}.

In this paper, we compare limit shapes $\mathcal{B}_p$, $0 \leq p < p_c$, and prove that a certain distance between $\mathcal{B}_p$ and $\mathcal{B}_q$ is of order $|q-p|$. Moreover, the method used to compare limit shapes also gives a nontrivial property for the size of the intersection of paths realizing the first-passage time.

\section{Results}

For $p \in [0,1]$ and $\boldsymbol{x} \in \mathbb{Z}^d$, the following limit exists (see Chapter 2 in \cite{Kesten86LNM}):
\begin{align}
\mu_p(\boldsymbol{x}) := \lim_{n \to \infty} \dfrac{E_p[T(\boldsymbol{O},n\boldsymbol{x})]}{n} =\inf_{n \geq 1} \dfrac{E_p[T(\boldsymbol{O},n\boldsymbol{x})]}{n}.
\label{def:TimeConstGen}
\end{align}
It is known that $\mu_p(\boldsymbol{x})$ can be extended to a seminorm on $\mathbb{R}^d$, and $\mu_p(\boldsymbol{x})$ is called the time constant. In particular, for $0 \leq p<p_c$, $\mu_p(\boldsymbol{x})>0$ for any $\boldsymbol{x} \in \mathbb{R}^d \setminus \{ \boldsymbol{O}\}$, and the limit shape $\mathcal{B}_p$ in \eqref{thm:ShapeTheoremBernoulli} is the unit ball for the norm $\mu_p(\,\cdot\,)$:
\[ \mathcal{B}_p = \{ \boldsymbol{x} \in \mathbb{R}^d : \mu_p(\boldsymbol{x}) \leq 1 \}. \]
Note that if $0\leq p<q \leq 1$, then $\mu_p(\boldsymbol{x}) \geq \mu_q(\boldsymbol{x})$ for any $\boldsymbol{x} \in \mathbb{R}^d$. 
For $p \geq p_c$, it is known that $\mu_p(\boldsymbol{x})=0$ for any $\boldsymbol{x} \in \mathbb{R}^d$, and the limit shape $\mathcal{B}_p=\mathbb{R}^d$, in that for any compact $K \subset \mathbb{R}^d$, $K \subset t^{-1} B(t)$ for all large $t$ with probability one. We can see that $\mathcal{B}_p \subset \mathcal{B}_q$ if $0\leq p<q \leq 1$.

For $\boldsymbol{e}_1:=(1,0,\ldots,0)$, the general theory developed in \cite{Cox80,CoxKesten81,Kesten86LNM} tells us that $\mu_p(\boldsymbol{e}_1)$ is continuous in $p$. Moreover, from another general result by van den Berg and Kesten \cite{vandenBergKesten93AOAP}, $\mu_p(\boldsymbol{e}_1)$ is strictly decreasing in $p \in [0,p_c]$. Furthermore, Wu and Feng \cite{WuFeng09} derived the following concrete lower bound for the decrease: If $0 \leq p < q <p_c$, then
\begin{align}
 \mu_p(\boldsymbol{e}_1) - \mu_q(\boldsymbol{e}_1) \geq \dfrac{\mu_q(\boldsymbol{e}_1)}{1-q} \cdot (q-p).
 \label{ineq:WuFeng09}
\end{align}

Our first theorem extends \eqref{ineq:WuFeng09} to all directions.

\begin{theorem} \label{Thm:TimeConstStrict}
If $0 \leq p < q <p_c$, then for each $\boldsymbol{x} \in \mathbb{R}^d$,
\begin{align}
 \mu_p(\boldsymbol{x}) - \mu_q(\boldsymbol{x}) \geq \dfrac{\mu_q(\boldsymbol{x})}{1-q} \cdot (q-p).
 \label{Ineq:TimeConstStrict}
\end{align}
\end{theorem}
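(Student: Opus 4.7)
The plan is to use the standard monotone coupling of $P_p$ and $P_q$ and to run a conditional first-moment calculation against a $p$-geodesic. The point is that the argument has nothing to do with the coordinate direction, so it reproduces \eqref{ineq:WuFeng09} simultaneously in every direction.

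Concretely, let $\{U(e)\}_{e \in \mathbb{E}^d}$ be i.i.d.\ uniform on $[0,1]$ and set $t_p(e) := \mathbf{1}_{\{U(e) > p\}}$, so that $\{t_p(e)\}_e$ realizes $P_p$ for every $p \in [0,1]$ and $t_q(e) \le t_p(e)$ whenever $p \le q$. Fix $\boldsymbol{x} \in \mathbb{Z}^d \setminus \{\boldsymbol{O}\}$ and $n \ge 1$, and let $\gamma_n$ be a measurable (say lexicographically smallest) $p$-geodesic from $\boldsymbol{O}$ to $n\boldsymbol{x}$; such a path exists because the passage times only take the values $0$ and $1$.

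The key computation is the following. Conditionally on the $p$-environment $\mathcal{F}_p := \sigma(t_p(e') : e' \in \mathbb{E}^d)$, the variables $\{t_q(e)\}$ remain independent, with $t_q(e)=0$ whenever $t_p(e)=0$, and with $P(t_q(e) = 1 \mid t_p(e) = 1) = (1-q)/(1-p)$, since $U(e)$ conditioned on $U(e) > p$ is uniform on $(p,1]$. Because $\gamma_n$ is $\mathcal{F}_p$-measurable, summing over its edges gives
$$E\bigl[t_q(\gamma_n) \,\big|\, \mathcal{F}_p\bigr] \;=\; \frac{1-q}{1-p}\sum_{e \in \gamma_n} t_p(e) \;=\; \frac{1-q}{1-p}\, T(\boldsymbol{O}, n\boldsymbol{x}),$$
so that $E\bigl[t_q(\gamma_n)\bigr] = \frac{1-q}{1-p}\, E_p\bigl[T(\boldsymbol{O}, n\boldsymbol{x})\bigr]$. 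Since $\gamma_n$ is also a path in the $q$-world, $T(\boldsymbol{O}, n\boldsymbol{x}) \leq t_q(\gamma_n)$ there, and consequently $E_q\bigl[T(\boldsymbol{O}, n\boldsymbol{x})\bigr] \leq \frac{1-q}{1-p}\, E_p\bigl[T(\boldsymbol{O}, n\boldsymbol{x})\bigr]$.

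Dividing by $n$ and invoking \eqref{def:TimeConstGen} yields $\mu_q(\boldsymbol{x})(1-p) \leq \mu_p(\boldsymbol{x})(1-q)$, which is \eqref{Ineq:TimeConstStrict} after a trivial rearrangement; homogeneity of the seminorms extends this to $\boldsymbol{x} \in \mathbb{Q}^d$, and their continuity handles all of $\mathbb{R}^d$. No step in this plan is a genuine obstacle: the only items requiring care are the measurable choice of $\gamma_n$ and the conditional-independence step in the first-moment calculation, both of which are routine. The real content is simply the observation that the coupling argument behind \eqref{ineq:WuFeng09} is direction-agnostic.
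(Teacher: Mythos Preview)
Your argument is correct and gives the same inequality, but it takes a different route from the paper's proof. The paper works in a finite box $S(N)$, uses Russo's formula (Lemma~\ref{lem:PassageTimeRusso}) to identify
\[
-\dfrac{d}{dp} E_p[T^N(\boldsymbol{O},n\boldsymbol{x})] = \dfrac{1}{1-p} E_p[\#\{ e \in \overline{\text{\scshape{Geo}}}^N(\boldsymbol{O},n\boldsymbol{x}): t(e)=1 \}],
\]
bounds the right-hand side from below by $\tfrac{1}{1-p}E_p[T^N(\boldsymbol{O},n\boldsymbol{x})]$, and then integrates to show that $E_p[T^N(\boldsymbol{O},n\boldsymbol{x})]/(1-p)$ is nonincreasing in $p$ before taking $N\to\infty$ and $n\to\infty$. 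Your proof bypasses Russo's formula and the finite-box truncation entirely: conditioning on the $p$-environment and evaluating $t_q$ along a $p$-geodesic gives the comparison $E_q[T(\boldsymbol{O},n\boldsymbol{x})]\le\tfrac{1-q}{1-p}E_p[T(\boldsymbol{O},n\boldsymbol{x})]$ in one step. This is more elementary and is in fact dual to what the paper does for Theorem~\ref{Thm:TimeConstLipschitz}, where a $q$-geodesic is used to bound $T_p$. What your approach loses is the identification of the derivative in terms of $\#\{ e \in \overline{\text{\scshape{Geo}}}^N: t(e)=1 \}$, which the paper exploits again in Section~\ref{sec:Intersection} to obtain lower bounds on the intersection of geodesics at and below criticality.
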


The next theorem gives a modulus of continuity of $\mu_p(\boldsymbol{x})$ in $p$.

\begin{theorem} \label{Thm:TimeConstLipschitz} Assume that $0<q_0<p_c$. There exists a constant $K=K(q_0)>0$ such that for any $0<p < q \leq q_0$ and any $\boldsymbol{x} \in \mathbb{R}^d$.
\begin{align}
\mu_p(\boldsymbol{x}) - \mu_q(\boldsymbol{x}) \leq \dfrac{K\mu_q(\boldsymbol{x})}{q}  \cdot (q-p) .
 \label{Ineq:TimeConstLipschitz}
\end{align}
In particular, $\mu_p(\boldsymbol{x})$ is Lipschitz continuous over any closed subinterval $[p_0,q_0]$ of $(0,p_c)$. 
\end{theorem}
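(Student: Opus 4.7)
The plan is a monotone coupling together with a path-substitution argument. Introduce i.i.d.\ uniforms $\{U_e\}_{e\in \mathbb{E}^d}$ on $[0,1]$ and set $t_r(e) := \mathbf{1}\{U_e > r\}$, so that $t_r$ has the law of the passage times under $P_r$ and $t_p \geq t_q$ pointwise when $p < q$. Fix $\boldsymbol{x} \in \mathbb{Z}^d$, $n \geq 1$, and let $\gamma_q$ denote a $q$-geodesic from $\boldsymbol{O}$ to $n\boldsymbol{x}$. Using $\gamma_q$ as a test path for $T_p$ yields
\[
T_p(\boldsymbol{O},n\boldsymbol{x}) \leq t_p(\gamma_q) = t_q(\gamma_q) + \bigl|\{e\in \gamma_q : t_q(e)=0,\ t_p(e)=1\}\bigr|.
\]
Conditional on the $q$-configuration (which determines $\gamma_q$), each zero-edge of $\gamma_q$ independently has $t_p(e)=1$ with probability $(q-p)/q = P(U_e \in (p,q] \mid U_e \leq q)$, so taking expectations gives
\[
E_p\bigl[T_p(\boldsymbol{O},n\boldsymbol{x})\bigr] \leq E_q\bigl[T_q(\boldsymbol{O},n\boldsymbol{x})\bigr] + \frac{q-p}{q}\, E_q\bigl[N_0(\gamma_q)\bigr],
\]
where $N_0(\gamma_q)$ counts the zero-time edges of $\gamma_q$.

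The main technical step is a linear bound $E_q[N_0(\gamma_q)] \leq C(q_0)\, n\, \|\boldsymbol{x}\|_1$, uniform in $q \leq q_0$. Since $N_0(\gamma_q)\leq|\gamma_q|$, this reduces to bounding the expected length of a $q$-geodesic. The natural approach is to exploit subcriticality of the zero-edge percolation: between any two consecutive one-edges of $\gamma_q$, the path traverses a single zero-cluster and so contributes at most that cluster's size worth of zero-edges. Since $\gamma_q$ has at most $T_q(\boldsymbol{O},n\boldsymbol{x})+1$ such zero-excursions and the mean cluster size $\chi(q) = E_q[|C_q(\boldsymbol{O})|]$ is finite for $q < p_c$ (and bounded by $\chi(q_0)$ for $q \leq q_0$), the heuristic target is $E_q[N_0(\gamma_q)] \leq \chi(q_0)(E_q[T_q(\boldsymbol{O},n\boldsymbol{x})]+1) \leq \chi(q_0)(n\|\boldsymbol{x}\|_1+1)$.

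Combining and letting $n \to \infty$ gives $\mu_p(\boldsymbol{x}) - \mu_q(\boldsymbol{x}) \leq \frac{q-p}{q}\, C(q_0)\, \|\boldsymbol{x}\|_1$ for $\boldsymbol{x} \in \mathbb{Z}^d$. To convert $\|\boldsymbol{x}\|_1$ into $\mu_q(\boldsymbol{x})$, note that $\mu_{q_0}$ is a norm equivalent to $\|\cdot\|_1$ on $\mathbb{R}^d$ (since $q_0 < p_c$): setting $m_0 := \min_{\|\boldsymbol{y}\|_1 = 1} \mu_{q_0}(\boldsymbol{y}) > 0$ gives $\mu_q(\boldsymbol{x}) \geq \mu_{q_0}(\boldsymbol{x}) \geq m_0 \|\boldsymbol{x}\|_1$, so taking $K := C(q_0)/m_0$ yields the desired inequality. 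Extension from $\mathbb{Z}^d$ to $\mathbb{R}^d$ follows by homogeneity and continuity of the seminorms $\mu_p, \mu_q$. I expect the main obstacle to lie in making the ``excursion-by-cluster'' heuristic rigorous: the entry vertices into zero-clusters are chosen by the geodesic and hence highly correlated with the cluster structure (longer excursions preferentially sit inside larger clusters), so turning the informal product bound into a genuine estimate of the form $C(q_0)\, E_q[T_q]$ will likely require a BK-type inequality or a careful conditioning/averaging argument, with the constant tracked explicitly in the gap $p_c-q_0>0$.
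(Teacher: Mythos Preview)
Your coupling via uniforms and the path-substitution step are exactly what the paper does, and your conditioning argument for the factor $(q-p)/q$ is correct (indeed slightly cleaner than the paper's version: conditioning on the full $\sigma$-algebra generated by $\{t_q(e)\}$ fixes $\gamma_q$, and then the $U_e$ with $t_q(e)=0$ are conditionally i.i.d.\ uniform on $[0,q]$). So up through
\[
E_p[T(\boldsymbol{O},n\boldsymbol{x})] \le E_q[T(\boldsymbol{O},n\boldsymbol{x})] + \frac{q-p}{q}\,E_q[N_0(\gamma_q)]
\]
you and the paper agree.

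The gap is precisely the one you flag: your ``excursion-by-cluster'' bound $E_q[N_0(\gamma_q)] \le \chi(q_0)\bigl(E_q[T_q]+1\bigr)$ is not justified, because the entry vertex of the $i$-th zero excursion is a function of the entire configuration, so the excursion is \emph{not} distributed like a segment of a typical cluster. No simple BK-type argument fixes this, and you have not supplied one. The paper avoids this correlation issue entirely by a different route: Kesten's large-deviation estimate, which says that for $q\le q_0<p_c$ there exist $a,C_1>0$ (depending only on $q_0$) with
\[
P_q\bigl(\exists\ \text{self-avoiding } \gamma \ni \boldsymbol{O},\ \#\gamma\ge m,\ t(\gamma)\le a m\bigr)\le e^{-C_1 m}.
\]
This immediately gives $\#\gamma_q \le a^{-1}T_q(\boldsymbol{O},n\boldsymbol{x})$ off an event whose contribution to $E_q[\#\gamma_q]$ is bounded uniformly in $n$, hence $E_q[\#\gamma_q]\le K\,E_q[T_q(\boldsymbol{O},n\boldsymbol{x})]$ for some $K=K(q_0)$ (this is the paper's Lemma~\ref{lem:ShotestGeoUpperBound}). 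Plugging this in and dividing by $n$ yields $\mu_p(\boldsymbol{x})-\mu_q(\boldsymbol{x})\le \frac{K(q-p)}{q}\mu_q(\boldsymbol{x})$ directly, with no need for the extra norm-comparison step you perform at the end. In short: replace the cluster heuristic by Kesten's estimate and the proof is complete.
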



For $A \subset \mathbb{R}^d$ and $\delta > 0$, set 
\[ N_{\delta} (A) := \{ \boldsymbol{y} \in \mathbb{R}^d : \text{$\| \boldsymbol{y}-\boldsymbol{x} \|_2 < \delta$ for some $\boldsymbol{x} \in A$} \}. \]
For $A,B \subset \mathbb{R}^d$, the Hausdorff distance between $A$ and $B$ is defined by
\[
d_{\mathcal{H}}(A,B) := \inf\{ \delta>0 : \text{$A \subset N_{\delta} (B)$ and $B \subset N_{\delta} (A)$} \}.
\]
As a corollary to Theorems \ref{Thm:TimeConstStrict} and \ref{Thm:TimeConstLipschitz}, we can derive linear bounds for $d_{\mathcal{H}}(\mathcal{B}_p,\mathcal{B}_q)$ in $|q-p|$.

\begin{corollary} \label{Cor:AsymptoticShapeHaudorff} Assume that $0<p_0<q_0<p_c$. We can find a constant $C>0$ such that for $p_0\leq p < q \leq q_0$, 
\[ q-p \leq d_{\mathcal{H}}(\mathcal{B}_p,\mathcal{B}_q) \leq C(q-p).\]
\end{corollary}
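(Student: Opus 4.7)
The plan is to combine Theorems \ref{Thm:TimeConstStrict} and \ref{Thm:TimeConstLipschitz} with a couple of convex-geometric observations. Since $\mu_p \geq \mu_q$ when $p<q$, we have $\mathcal{B}_p \subset \mathcal{B}_q$, and consequently
\[ d_{\mathcal{H}}(\mathcal{B}_p,\mathcal{B}_q) = \sup_{\boldsymbol{y} \in \mathcal{B}_q} \inf_{\boldsymbol{x} \in \mathcal{B}_p} \|\boldsymbol{y}-\boldsymbol{x}\|_2. \]
For the upper bound, Theorem \ref{Thm:TimeConstLipschitz} together with $q \geq p_0$ yields $\mu_p(\boldsymbol{x}) \leq \lambda\, \mu_q(\boldsymbol{x})$ for all $\boldsymbol{x} \in \mathbb{R}^d$, where $\lambda := 1 + K(q-p)/p_0$. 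Hence, given $\boldsymbol{y} \in \mathcal{B}_q$, the contracted point $\boldsymbol{y}/\lambda$ lies in $\mathcal{B}_p$ and has Euclidean distance $(1-1/\lambda)\|\boldsymbol{y}\|_2 \leq K(q-p)\|\boldsymbol{y}\|_2/p_0$ from $\boldsymbol{y}$. The inclusion $\mathcal{B}_q \subset \mathcal{B}_{q_0}$, combined with compactness of $\mathcal{B}_{q_0}$ (guaranteed by $q_0<p_c$), bounds $\|\boldsymbol{y}\|_2$ uniformly by a constant $R=R(q_0)$, giving the upper bound with $C=RK/p_0$.

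For the lower bound, I would consider the axis point $\boldsymbol{y} := r_q\boldsymbol{e}_1 \in \partial \mathcal{B}_q$, where $r_q := 1/\mu_q(\boldsymbol{e}_1)$, and set $r_p := 1/\mu_p(\boldsymbol{e}_1)$. Because $\mathbb{L}^d$ is invariant under coordinate-sign reflections, so is $\mu_p$; combined with the convexity of $\mathcal{B}_p$, averaging over the $2^{d-1}$ sign flips of $x_2,\ldots,x_d$ shows that for every $\boldsymbol{x}=(x_1,\ldots,x_d) \in \mathcal{B}_p$ the projection $(x_1,0,\ldots,0)$ also lies in $\mathcal{B}_p$, forcing $|x_1| \leq r_p$. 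Consequently
\[ \inf_{\boldsymbol{x} \in \mathcal{B}_p} \|\boldsymbol{y}-\boldsymbol{x}\|_2 \geq r_q - r_p = \frac{\mu_p(\boldsymbol{e}_1) - \mu_q(\boldsymbol{e}_1)}{\mu_p(\boldsymbol{e}_1)\,\mu_q(\boldsymbol{e}_1)}. \]
Theorem \ref{Thm:TimeConstStrict} applied to $\boldsymbol{x}=\boldsymbol{e}_1$ (equivalently \eqref{ineq:WuFeng09}) bounds the numerator from below by $\mu_q(\boldsymbol{e}_1)(q-p)/(1-q)$, and the trivial single-edge estimate $\mu_p(\boldsymbol{e}_1) \leq 1-p$ combined with $(1-p)(1-q) \leq 1$ converts this into $r_q-r_p \geq q-p$, as required.

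The most delicate point is the symmetry-and-convexity reduction in the lower bound: a naive estimate of the form $\|\boldsymbol{z}\|_2 \geq \mu_p(\boldsymbol{z})/C_p$ would introduce a dimensional factor and only deliver a bound of order $(q-p)/\sqrt{d}$. Exploiting the coordinate symmetries of $\mathbb{L}^d$ to collapse the Euclidean distance from $r_q\boldsymbol{e}_1$ to $\mathcal{B}_p$ into the one-dimensional gap $r_q-r_p$ is what eliminates this factor and yields the clean constant $1$ in front of $q-p$ stated in the corollary.
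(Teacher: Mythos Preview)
Your argument is correct. The upper bound is exactly the paper's: both of you pass from Theorem~\ref{Thm:TimeConstLipschitz} to a radial contraction $\boldsymbol{y}\mapsto \boldsymbol{y}/\lambda$ and then bound $\|\boldsymbol{y}\|_2$ by the outer radius of $\mathcal{B}_{q_0}$, giving $C=K R(q_0)/p_0$.

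For the lower bound the two proofs diverge slightly. The paper sandwiches $d_{\mathcal{H}}(\mathcal{B}_p,\mathcal{B}_q)$ between $\inf_{\boldsymbol{u}\in\mathbb{S}^{d-1}}$ and $\sup_{\boldsymbol{u}\in\mathbb{S}^{d-1}}$ of the radial gap $(\mu_p(\boldsymbol{u})-\mu_q(\boldsymbol{u}))/(\mu_p(\boldsymbol{u})\mu_q(\boldsymbol{u}))$, and then combines Theorem~\ref{Thm:TimeConstStrict} with the uniform bound $\mu_p(\boldsymbol{u})\le\mu_0(\boldsymbol{u})=\|\boldsymbol{u}\|_1$. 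You instead fix the single direction $\boldsymbol{e}_1$, use the coordinate symmetries of $\mu_p$ together with convexity to show that every $\boldsymbol{x}\in\mathcal{B}_p$ has $|x_1|\le r_p$, hence $\operatorname{dist}(r_q\boldsymbol{e}_1,\mathcal{B}_p)\ge r_q-r_p$, and finish with the direct-path bound $\mu_p(\boldsymbol{e}_1)\le 1-p$. Your route has the advantage that the inequality $d_{\mathcal{H}}\ge r_q-r_p$ is fully justified by the symmetry-and-convexity step; the paper's asserted infimum lower bound on $d_{\mathcal{H}}$ in terms of radial gaps is less transparent, and in fact implicitly relies on the same symmetry reduction you spell out. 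The paper's route, on the other hand, makes visible that the bound from Theorem~\ref{Thm:TimeConstStrict} is available in \emph{every} direction, not just along~$\boldsymbol{e}_1$.
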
 

We review literatures closely related to our main results. In bond percolation on $\mathbb{L}^d$ with density $\rho$ (i.e. each edge is open with probability $\rho$), if $\rho>p_c$, then there exists a unique infinite open cluster $\mathcal{C}_{\infty}$ with probability one. The first-passage percolation on $\mathcal{C}_{\infty}$ is studied by Garet, Marchand, Procaccia, and Th\'{e}ret \cite{GaretMarchandProcacciaTheret17EJP}. They show that the continuity of the time constant and the limit shape with respect to the probability law of $t(e)$. This model is equivalent to allow $t(e)$ to be $+\infty$ in first-passage percolation on $\mathbb{L}^d$. In the special case where
\[ P_{\rho} (t(e)=1)=\rho \quad \mbox{and} \quad P_{\rho} (t(e)=+\infty)=1-\rho, \]
the regularity of the time constant and the limit shape with respect to $\rho \in (p_c,1]$ is explored by  
Dembin \cite{Dembin21ESAIM} and Cerf and Dembin \cite{CerfDembin21}. It is shown in \cite{CerfDembin21} that for $p_0>p_c$, the time constant is Lipschitz continuous on $\rho \in [p_0,1]$. Note that although this result looks similar to our Theorem \ref{Thm:TimeConstLipschitz}, the results in \cite{CerfDembin21} do not imply it. 

The proofs of our main results will be given in Section \ref{sec:Proofs}, where geometric properties of geodesics play important roles. As an application of ideas in the proof of Theorem \ref{Thm:TimeConstStrict}, we can obtain a new result on the expected size of intersection of geodesics for the {\it critical} case. It will be discussed in Section \ref{sec:Intersection}.

\section{Proofs}\label{sec:Proofs}

\subsection{Proof of Theorem \ref{Thm:TimeConstStrict}}

Fix $n \in \mathbb{N}$, $\boldsymbol{x} \in \mathbb{Z}^d$, and $N \in \mathbb{N}$ with $N>\|n\boldsymbol{x}\|_{\infty}$. The first-passage time over paths from $\boldsymbol{O}$ to $n\boldsymbol{x}$ contained in $S(N):=[-N,N]^d$ is denoted by $T^N(\boldsymbol{O},n\boldsymbol{x})$. A path $\gamma$ from $\boldsymbol{O}$ to $n\boldsymbol{x}$ with $t(\gamma)=T^N(\boldsymbol{O},n\boldsymbol{x})$ is called a geodesic for $T^N(\boldsymbol{O},n\boldsymbol{x})$. The edge intersection of geodesics for $T^N(\boldsymbol{O},n\boldsymbol{x})$ is denoted by $\overline{\text{\scshape{Geo}}}^N(\boldsymbol{O},n\boldsymbol{x})$. 

\begin{lemma} \label{lem:PassageTimeRusso} For $p \in (0,1)$,
\begin{align}
-\dfrac{d}{dp} E_p[T^N(\boldsymbol{O},n\boldsymbol{x})] = \dfrac{1}{1-p} E_p[\#\{ e \in \overline{\text{\scshape{Geo}}}^N(\boldsymbol{O},n\boldsymbol{x}): t(e)=1 \} ]. 
\label{eq:PassageTimeRusso}
\end{align}\end{lemma}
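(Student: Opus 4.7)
My plan is to recognize \eqref{eq:PassageTimeRusso} as an instance of the Margulis--Russo differential identity specialized to $E_p[T^N(\boldsymbol{O},n\boldsymbol{x})]$, with pivotal edges translated into the geodesic language. Since $T^N(\boldsymbol{O},n\boldsymbol{x})$ depends only on the finitely many passage times $\{t(e):e\subset S(N)\}$, differentiation under the expectation is unproblematic.

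First I would establish the bare identity
\[
\dfrac{d}{dp} E_p[T^N(\boldsymbol{O},n\boldsymbol{x})] = \sum_{e\subset S(N)} E_p\bigl[T^N|_{t(e)=0}-T^N|_{t(e)=1}\bigr],
\]
where $T^N|_{t(e)=c}$ denotes $T^N$ recomputed with the value of $t(e)$ reset to $c$. This follows from the product form of $P_p$: conditioning on the other edges yields $E_p[T^N\mid(t(e'))_{e'\neq e}]=p\,T^N|_{t(e)=0}+(1-p)\,T^N|_{t(e)=1}$, whose $p$-derivative is the displayed difference, and independence of the coordinates allows me to sum edge by edge.

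Next I would analyze the single-edge increment combinatorially. Freezing $(t(e'))_{e'\neq e}$, let $A$ be the minimum of $t(\gamma)$ over self-avoiding $\boldsymbol{O}\to n\boldsymbol{x}$ paths in $S(N)$ that use $e$, computed under $t(e)=0$, and let $B$ be the corresponding minimum over paths avoiding $e$, which does not depend on $t(e)$. Then $T^N|_{t(e)=c}=\min(A+c,B)$, so
\[
T^N|_{t(e)=1}-T^N|_{t(e)=0}=\mathbf{1}\{B\ge A+1\};
\]
I will call $e$ \emph{pivotal} when this holds. The event $\{e\text{ pivotal}\}$ is measurable with respect to $(t(e'))_{e'\neq e}$, hence independent of $t(e)$ under $P_p$.

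The crucial step is to identify pivotality with membership in $\overline{\text{\scshape{Geo}}}^N(\boldsymbol{O},n\boldsymbol{x})$ on the event $\{t(e)=1\}$: under this conditioning, the passage time equals $\min(A+1,B)$, and the cheapest path through $e$ has cost $A+1$, which realizes the passage time exactly when $A+1\le B$, that is, precisely when $e$ is pivotal. This matches the two events $\{t(e)=1,\,e\text{ pivotal}\}=\{t(e)=1,\,e\in\overline{\text{\scshape{Geo}}}^N(\boldsymbol{O},n\boldsymbol{x})\}$, and combining with the independence above gives
\[
P_p(e\text{ pivotal})=\dfrac{P_p(e\in\overline{\text{\scshape{Geo}}}^N(\boldsymbol{O},n\boldsymbol{x}),\,t(e)=1)}{1-p}.
\]
Summing over $e\subset S(N)$ and combining with the first display produces \eqref{eq:PassageTimeRusso}. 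The main place demanding care is the boundary case $B=A+1$, where geodesics through $e$ coexist with geodesics avoiding $e$; the integer-valuedness of $T^N$ together with the $\min$-structure above ensures that the identification between pivotality and the geodesic event goes through cleanly.
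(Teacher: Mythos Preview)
Your argument is correct and rests on the same two ingredients as the paper's proof: a Russo-type differentiation formula and the identification, on $\{t(e)=1\}$, of pivotal edges with edges lying on a geodesic. The organizational difference is that the paper first writes $E_p[T^N]=\sum_{k\ge 1}P_p(T^N\ge k)$ and applies the classical Russo formula to each decreasing event $A_{n,k}=\{T^N\ge k\}$, then observes that on $\{T^N=k\}$ the pivotal edges for $A_{n,k}$ are exactly the weight-$1$ edges in $\overline{\text{\scshape Geo}}^N$, while on $\{T^N>k\}$ there are none; summing over $k$ collapses the telescoping. You bypass the tail-sum by applying the Margulis--Russo identity directly to the integer-valued functional, computing the single-edge increment $T^N|_{t(e)=1}-T^N|_{t(e)=0}=\mathbf{1}\{B\ge A+1\}$ in one stroke via the $\min(A+c,B)$ representation. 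Your route is slightly more direct and makes the role of the boundary case $B=A+1$ explicit; the paper's route has the advantage of invoking Russo's formula in its textbook form for monotone events, at the cost of the extra layer of summation over $k$.
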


\begin{proof} 
Let $A_{n,k} := \{ T^N(\boldsymbol{O},n\boldsymbol{x}) \geq k \}$. We have
\begin{align}
E_p[T^N(\boldsymbol{O},n\boldsymbol{x})] = \sum_{k=1}^{n\|\boldsymbol{x}\|_1} P_p(A_{n,k}).
\label{eq:WuFeng09-1}
\end{align}
The event $A_{n,k}$ depends on the edges in $S(N)$, and decreasing in the configuration of passage times. Define
\begin{align*}
\mathcal{N}(A_{n,k}) := \#\{ e \subset S(N) : \mbox{$e$ is pivotal for $A_{n,k}$} \}.
\end{align*}
By Russo's formula (see e.g. Theorem 2.25 in \cite{Grimmett99Book}), 
\begin{align*}
-\dfrac{d}{dp} P_p(A_{n,k}) &= E_p[ \mathcal{N}(A_{n,k}) ] \\
&= \sum_{e \subset S(N)} P_p(\mbox{$e$ is pivotal for $A_{n,k}$}) \\
&= \dfrac{1}{1-p} \sum_{e \subset S(N)} P_p(\{\mbox{$e$ is pivotal for $A_{n,k}$}\} \cap A_{n,k}) \\
&= \dfrac{1}{1-p} E_p[ \mathcal{N}(A_{n,k}) : A_{n,k}].
\end{align*}
Noting that
\begin{align*}
 \mathcal{N}(A_{n,k}) = \begin{cases}
  \#\{ e \in \overline{\text{\scshape{Geo}}}^N(\boldsymbol{O},n\boldsymbol{x}): t(e)=1 \} &\mbox{if $T^N(\boldsymbol{O},n\boldsymbol{x})=k$, and} \\
  0 & \mbox{if $T^N(\boldsymbol{O},n\boldsymbol{x})>k$},
  \end{cases}
\end{align*}
we have
\begin{align}
&-\dfrac{d}{dp} P_p(A_{n,k}) \notag \\
&= \dfrac{1}{1-p} E_p[ \#\{ e \in \overline{\text{\scshape{Geo}}}^N(\boldsymbol{O},n\boldsymbol{x}): t(e)=1 \} : T^N(\boldsymbol{O},n\boldsymbol{x})=k].
\label{eq:WuFeng09-2}
\end{align}
Now \eqref{eq:PassageTimeRusso} follows from \eqref{eq:WuFeng09-1} and \eqref{eq:WuFeng09-2}.
\end{proof}

Since 
\begin{align}
\#\{ e \in \overline{\text{\scshape{Geo}}}^N(\boldsymbol{O},n\boldsymbol{x}): t(e)=1 \} \geqq t(\gamma)=T^N(\boldsymbol{O},n\boldsymbol{x})
\label{ineq:FengWu09Key}
\end{align}
for any geodesic $\gamma$ for $T^N(\boldsymbol{O},n\boldsymbol{x})$, we have
\[  -\dfrac{d}{dp} E_p[T^N(\boldsymbol{O},n\boldsymbol{x})] \geq \dfrac{1}{1-p} \cdot E_p[T^N(\boldsymbol{O},n\boldsymbol{x})],  \]
which implies that
\[ \dfrac{d}{dp} \left( \dfrac{E_p[T^N(\boldsymbol{O},n\boldsymbol{x})]}{1-p} \right) \leq 0\quad \mbox{for $p \in (0,1)$.} \]
Thus we obtain
\[ \dfrac{E_p[T^N(\boldsymbol{O},n\boldsymbol{x})]}{1-p} \geq \dfrac{E_q[T^N(\boldsymbol{O},n\boldsymbol{x})]}{1-q} \quad \mbox{for $0\leq p<q<1$.} \]
Letting $N \to \infty$, dividing both sides by $n$, and then letting $n \to \infty$, we have
\[ \dfrac{\mu_p(\boldsymbol{x})}{1-p} \geq \dfrac{\mu_q(\boldsymbol{x})}{1-q}, \]
which is equivalent to \eqref{Ineq:TimeConstStrict}.
\qed

\subsection{Proof of Theorem \ref{Thm:TimeConstLipschitz}}

Our proof is inspired by that of Lemma 3 in Zhang \cite{Zhang06PTRF}. We begin with a lemma. 
The geodesic for $T(\boldsymbol{O},\boldsymbol{x})$ with maximal number of edges is denoted by $\text{\scshape{Geo}}(\boldsymbol{O},\boldsymbol{x})$.

\begin{lemma} \label{lem:ShotestGeoUpperBound} Let $q_0 \in (0,p_c)$. There exists a constant $K=K(q_0)>0$ such that for any $q \in (0, q_0]$ and any $\boldsymbol{x} \in \mathbb{Z}^d$,
\begin{align}
E_q[\# \text{\scshape{Geo}}(\boldsymbol{O},\boldsymbol{x})] \leq K \cdot E_q[T(\boldsymbol{O},\boldsymbol{x})].
\label{ineq:ShotestGeoUpperBound}
\end{align}
\end{lemma}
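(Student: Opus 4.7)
The plan is to decompose the maximal geodesic $\gamma := \text{\scshape{Geo}}(\boldsymbol{O},\boldsymbol{x})$ according to the passage times of its edges: the $T := T(\boldsymbol{O},\boldsymbol{x})$ edges $e$ of $\gamma$ with $t(e) = 1$ split $\gamma$ into at most $T+1$ maximal ``$0$-runs,'' each a self-avoiding path of $0$-edges contained in a single connected component of $\{e \in \mathbb{E}^d : t(e) = 0\}$. Writing $C(v)$ for the component (``$0$-cluster'') of $v$, setting $u_0 := \boldsymbol{O}$, and letting $u_i$ ($i = 1, \ldots, T$) be the vertex of $\gamma$ immediately after the $i$-th $1$-edge, the self-avoidance of $\gamma$ forces the $u_i$'s to be distinct and bounds the length of the $i$-th $0$-run by $|C(u_i)|-1$. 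Hence
\[ \#\gamma \;\leq\; T + \sum_{i=0}^{T} |C(u_i)|. \]
Since $q \leq q_0 < p_c$, the Aizenman--Barsky--Menshikov exponential-decay theorem gives $m(q_0) := \sup_{q \in (0,q_0]} E_q[|C(\boldsymbol{O})|] < \infty$.

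The key step is to show
\[ E_q\!\left[\sum_{i=0}^{T} |C(u_i)|\right] \;\leq\; K_1(q_0)\cdot E_q[T+1]. \]
By the distinctness of the $u_i$'s, the left-hand side equals $\sum_v E_q\bigl[|C(v)|\,\mathbf{1}_{v \in \{u_0,\ldots,u_T\}}\bigr]$, of which the $v = \boldsymbol{O}$ contribution is $m(q) \leq m(q_0)$. For $v \neq \boldsymbol{O}$, being an entrance requires some incident edge $\{w,v\}$ to lie on $\gamma$ with $t(\{w,v\}) = 1$, so
\[ \mathbf{1}_{v \in \{u_1,\ldots,u_T\}} \;\leq\; \sum_{w \sim v} \mathbf{1}_{\{w,v\} \in \gamma,\, t(\{w,v\}) = 1}. \]
Since $\{w,v\}$, being a $1$-edge, is not in $C(v)$, the variable $t(\{w,v\})$ is independent of the $0$-edges inside $C(v)$; the plan is to decouple $|C(v)|$ from $\mathbf{1}_{\{w,v\}\in\gamma}$ up to a constant factor and then sum over edges $\{w,v\}$, using $E_q[\#\{e \in \gamma : t(e) = 1\}] = E_q[T]$. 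To pass from $E_q[T+1]$ to $O(E_q[T])$, note that for $\boldsymbol{x} \neq \boldsymbol{O}$ the inclusion $\{T = 0\} \subset \{|C(\boldsymbol{O})| \geq 2\}$ yields $E_q[T] \geq P_q(T \geq 1) \geq P_q(|C(\boldsymbol{O})| = 1) = (1-q)^{2d} \geq (1-q_0)^{2d}$, a positive lower bound uniform in $\boldsymbol{x}$.

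The principal obstacle is making the decoupling step quantitative: although $t(\{w,v\})$ is independent of $|C(v)|$, the geometric event $\{w,v\} \in \gamma$ depends on the entire configuration and is plausibly positively correlated with $|C(v)|$, because enlarging the cluster at $v$ shrinks the contracted cluster graph on which the shortest $1$-edge path lies. Following the spirit of Lemma~3 in Zhang~\cite{Zhang06PTRF}, the intended remedy is to condition on the (finite) cluster $C(v)$---a function of the $0$-edges interior to it---and then apply a BK- or finite-energy-type comparison together with translation invariance to absorb the conditional dependence into a multiplicative constant $C(q_0)$.
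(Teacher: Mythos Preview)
Your plan differs substantially from the paper's proof, and the gap you yourself flag---the decoupling of $|C(v)|$ from the event that the maximal geodesic enters $v$ through a $1$-edge---is genuine and is not closed by the gestures toward BK, finite energy, or Zhang's Lemma~3. The difficulty is structural: $\text{\scshape{Geo}}(\boldsymbol{O},\boldsymbol{x})$ is the geodesic of \emph{maximal} edge-count, so among all passage-time-minimizing paths it deliberately selects long $0$-runs. Hence on the event $\{v\in\{u_1,\dots,u_T\}\}$ the cluster $C(v)$ is size-biased upward relative to its unconditional law, and conditioning on $C(v)=S$ only fixes the interior and boundary edges of $S$; the exterior configuration that determines whether $\gamma$ routes through $S$ still ``sees'' that $S$ is large (a big cluster is more likely to be tie-breaking for maximal length). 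Neither a BK inequality nor resampling a single edge dissolves this dependence, and Zhang's Lemma~3 concerns a different kind of conditioning (fixing a geodesic and resampling one edge on it), which does not control $E_q[|C(v)|\,\mathbf 1_{v\text{ is an entrance}}]$. So what you have is a plan whose hardest step is not carried out.

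The paper sidesteps the cluster bookkeeping entirely. It invokes the uniform large-deviation bound (Kesten, Proposition~5.8; equivalently Lemmas~4.5--4.6 in \cite{AuffingerDamronHanson50years}): for every $q\le q_0<p_c$ there exist $a,C_1>0$, depending only on $q_0$, such that any self-avoiding path from $\boldsymbol{O}$ with at least $m$ edges has passage time $>am$ except with probability $\le e^{-C_1 m}$. Writing $\#\text{\scshape{Geo}}\le a^{-1}T(\boldsymbol{O},\boldsymbol{x})+\#\text{\scshape{Geo}}\cdot\mathbf 1_{\{T<a\,\#\text{\scshape{Geo}}\}}$, the second term has expectation bounded by a constant $C_3(q_0)$ via the exponential tail; that additive constant is then absorbed into $K\cdot E_q[T(\boldsymbol{O},\boldsymbol{x})]$ using $E_{q_0}[T(\boldsymbol{O},\boldsymbol{x})]\to\infty$ as $\|\boldsymbol{x}\|_1\to\infty$ and monotonicity in $q$ (your final paragraph's lower bound on $E_q[T]$ plays the same role). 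The point is that the path-counting estimate already encodes the subcritical geometry, so no correlation between cluster sizes and geodesic events needs to be controlled.
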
 

\begin{proof} We closely follows that of Proposition 4.7 (1) in Auffinger, Damron and Hanson \cite{AuffingerDamronHanson50years}. There exist constants $a,C_1>0$ depending only on $q_0$ such that for any $q \in (0, q_0]$ and any $m \in \mathbb{N}$,
\begin{align}
 P_q\left(
\begin{array}{@{\,}c@{\,}}
\text{there is a self-avoiding path $\gamma$ stating from $\boldsymbol{O}$}\\
\text{with $\# \gamma \geq m$ but $t(\gamma) \leq a m$} \\
\end{array}
\right) \leqq e^{-C_1 m}.
\label{ineq:Kesten86Prop5.8Bernoulli}
\end{align}
To prove this, we have only to choose the constant $M$ in the proof of Lemma 4.5 in \cite{AuffingerDamronHanson50years} as 
\[
\sum_{\boldsymbol{y} \in \mathbb{Z}^d : \| \boldsymbol{y} \|_{\infty}=M} P_{q_0} (T(\boldsymbol{O},\boldsymbol{y})=0) \leq \dfrac{1}{2}.
\]
We estimate
\begin{align*}
E_q[\# \text{\scshape{Geo}}(\boldsymbol{O},\boldsymbol{x})] \leq \dfrac{1}{a} E_q[T(\boldsymbol{O},\boldsymbol{x})] + E_q[Y_{\boldsymbol{x}}], 
\end{align*}
where 
\[ Y_{\boldsymbol{x}} := \# \text{\scshape{Geo}}(\boldsymbol{O},\boldsymbol{x}) \cdot 1_{\{ T(\boldsymbol{O},\boldsymbol{x}) < a \# \text{\scshape{Geo}}(\boldsymbol{O},\boldsymbol{x})\}}. \]
Following the proof of Lemma 4.6 in \cite{AuffingerDamronHanson50years} together with \eqref{ineq:Kesten86Prop5.8Bernoulli}, we can find a constant $C_2=C_2(q_0)>0$ such that
\[ P_q(Y_{\boldsymbol{x}} \geq n) \leq C_2 e^{-C_1n} \quad \mbox{for $n \in \mathbb{N}$}, \]
which implies that
\[ E_q[Y_{\boldsymbol{x}}] \leq C_3 \]
for some $C_3=C_3(q_0)>0$.
Since $E_{q_0}[T(\boldsymbol{O},\boldsymbol{x})] \to \infty$ as $\|\boldsymbol{x}\|_1 \to \infty$,
we can take a constant $C_4=C_4(q_0)$ such that
\[
C_3 \leq C_4 \cdot E_{q_0}[T(\boldsymbol{O},\boldsymbol{x})] \quad \mbox{for all $\boldsymbol{x} \in \mathbb{Z}^d$.}
\]
Noting that $E_q[T(\boldsymbol{O},\boldsymbol{x})] \geq E_{q_0}[T(\boldsymbol{O},\boldsymbol{x})]$, \eqref{ineq:ShotestGeoUpperBound} holds true with $K:=\dfrac{1}{a} +C_4$.
\end{proof}

Let $\{U(e)\}_{e \in \mathbb{E}^d}$ be independent uniform random variables on $[0,1]$, and define
\[ t_p(e) := \begin{cases}
0 &(U(e) \leq p), \\
1 &(U(e)>p) \\
\end{cases} \quad \mbox{for each $e \in \mathbb{E}^d$.} \]
Suppose that $0<p<q<p_c$ and $\boldsymbol{x} \in \mathbb{Z}^d$. The first-passage time from $\boldsymbol{O}$ to $n\boldsymbol{x}$ in the configuration $\{t_p(e)\}_{e \in \mathbb{E}^d}$ is denoted by $T_p(\boldsymbol{O},n\boldsymbol{x})$. A geodesic from $\boldsymbol{O}$ to $n\boldsymbol{x}$ in the configuration $\{t_q(e)\}_{e \in \mathbb{E}^d}$ with maximal number is denoted by $\text{\scshape{Geo}}_q (\boldsymbol{O},n\boldsymbol{x})$. Then we have
\begin{align}
T_p(\boldsymbol{O},n\boldsymbol{x}) &\leq \sum_{b \in \text{\scshape{Geo}}_q (\boldsymbol{O},n\boldsymbol{x})} t_p(b) \notag \\
&=  \sum_{b \in \text{\scshape{Geo}}_q (\boldsymbol{O},n\boldsymbol{x})} t_q(b) +  \sum_{b \in \text{\scshape{Geo}}_q (\boldsymbol{O},n\boldsymbol{x})} \{ t_p(b)-t_q(b) \} \notag \\
&= T_q(\boldsymbol{O},n\boldsymbol{x}) + \sum_{b \in \text{\scshape{Geo}}_q (\boldsymbol{O},n\boldsymbol{x})} \{ t_p(b)-t_q(b) \}. \label{ineq:Zhang06Lem3var1}
\end{align}
We prove
\begin{align}
E\left[ \sum_{b \in \text{\scshape{Geo}}_q (\boldsymbol{O},n\boldsymbol{x})} \{ t_p(b)-t_q(b) \}  \right] \leq \dfrac{q-p}{q} \cdot E_q[\# \text{\scshape{Geo}} (\boldsymbol{O},n\boldsymbol{x})].
\label{ineq:Zhang06Lem3var2}
\end{align}
The left hand side of \eqref{ineq:Zhang06Lem3var2} is
\begin{align}
 \sum_{\gamma: \boldsymbol{O} \to n\boldsymbol{x}} \sum_{b \in \gamma} P(t_p(b)=1,\,t_q(b)=0,\text{\scshape{Geo}}_q (\boldsymbol{O},n\boldsymbol{x})=\gamma).
 \label{ineq:Zhang06Lem3var3}
\end{align}
We define a new configuration $\{t_q^*(e)\}_{e \in \mathbb{E}^d}$ by
\[
t_q^*(e):=\begin{cases}
0 &(e=b), \\
t_q(e) &(e \neq b) \\
\end{cases}  \quad \mbox{for each $e \in \mathbb{E}^d$.}
\]
A geodesic from $\boldsymbol{O}$ to $n\boldsymbol{x}$ in the configuration $\{t_q^*(e)\}_{e \in \mathbb{E}^d}$ with maximal number is denoted by $\text{\scshape{Geo}}_q^* (\boldsymbol{O},n\boldsymbol{x})$.
Using independence, 
\begin{align*}
&P(t_p(b)=1 \mid t_q(b)=0,\text{\scshape{Geo}}_q (\boldsymbol{O},n\boldsymbol{x})=\gamma) \\
&=P(t_p(b)=1 \mid t_q(b)=0,\text{\scshape{Geo}}_q^* (\boldsymbol{O},n\boldsymbol{x})=\gamma) \\
&=P(t_p(b)=1 \mid  t_q(b)=0) = \dfrac{q-p}{q}.
\end{align*}
As
\[
P(t_q(b)=0,\text{\scshape{Geo}}_q (\boldsymbol{O},n\boldsymbol{x})=\gamma)\leq P_q (\text{\scshape{Geo}} (\boldsymbol{O},n\boldsymbol{x})=\gamma),
\]
we can see that \eqref{ineq:Zhang06Lem3var3} is not more than
\begin{align*}
&\sum_{\gamma: \boldsymbol{O} \to n\boldsymbol{x}} \sum_{b \in \gamma} \dfrac{q-p}{q} \cdot P_q (\text{\scshape{Geo}} (\boldsymbol{O},n\boldsymbol{x})=\gamma) \\
&= \dfrac{q-p}{q} \cdot \sum_{\gamma: \boldsymbol{O} \to n\boldsymbol{x}} (\# \gamma)\cdot P_q (\text{\scshape{Geo}} (\boldsymbol{O},n\boldsymbol{x})=\gamma) \\
&= \dfrac{q-p}{q} \cdot E_q[\# \text{\scshape{Geo}} (\boldsymbol{O},n\boldsymbol{x})].
\end{align*}
This completes the proof of \eqref{ineq:Zhang06Lem3var2}. By \eqref{ineq:Zhang06Lem3var1} and \eqref{ineq:Zhang06Lem3var2}, we obtain
\begin{align*}
E_p[T(\boldsymbol{O},n\boldsymbol{x}) ] &\leq E_q[T(\boldsymbol{O},n\boldsymbol{x})]+ \dfrac{q-p}{q} \cdot E_q[ \#\text{\scshape{Geo}} (\boldsymbol{O},n\boldsymbol{x}) ].
\end{align*}
\eqref{Ineq:TimeConstLipschitz} follows from Lemma \ref{lem:ShotestGeoUpperBound} and \eqref{def:TimeConstGen}.
\qed

\begin{proof}[Proof of Corollary \ref{Cor:AsymptoticShapeHaudorff}] Assume that $0 \leq p<q <p_c$. Since $\mathcal{B}_p \subset \mathcal{B}_q$, 
\[
d_{\mathcal{H}}(\mathcal{B}_p, \mathcal{B}_q) = \inf\{ \delta>0 : \text{$\mathcal{B}_q \subset  N_{\delta} (\mathcal{B}_p)$} \}.
\]
Define
\[ \mathbb{S}^{d-1}:=\{ \boldsymbol{u} \in \mathbb{R}^d : \|  \boldsymbol{u} \|_2 = 1\}. \]
We can see that
\[ \inf_{\boldsymbol{u} \in\mathbb{S}^{d-1}} \left\| \dfrac{\boldsymbol{u}}{\mu_q(\boldsymbol{u})} - \dfrac{\boldsymbol{u}}{\mu_p(\boldsymbol{u})}\right\|_2 \leq d_{\mathcal{H}}(\mathcal{B}_p,\mathcal{B}_q) \leq \sup_{\boldsymbol{u} \in\mathbb{S}^{d-1}} \left\| \dfrac{\boldsymbol{u}}{\mu_q(\boldsymbol{u})} - \dfrac{\boldsymbol{u}}{\mu_p(\boldsymbol{u})}\right\|_2, \]
and
\begin{align*}
\left\| \dfrac{\boldsymbol{u}}{\mu_q(\boldsymbol{u})} - \dfrac{\boldsymbol{u}}{\mu_p(\boldsymbol{u})}\right\|_2 &= \left|  \dfrac{1}{\mu_q(\boldsymbol{u})} -  \dfrac{1}{\mu_p(\boldsymbol{u})} \right| \cdot \| \boldsymbol{u}\|_2 
= \dfrac{\mu_p(\boldsymbol{u})-\mu_q(\boldsymbol{u})}{\mu_p(\boldsymbol{u})\mu_q(\boldsymbol{u})}.
\end{align*}
By Theorem \ref{Thm:TimeConstStrict},
\begin{align*}
d_{\mathcal{H}}(\mathcal{B}_p, \mathcal{B}_q) &\geq \dfrac{q-p}{1-q} \cdot \inf_{\boldsymbol{u} \in\mathbb{S}^{d-1}} \dfrac{1}{\mu_p(\boldsymbol{u})} \\
 &\geq (q-p) \cdot \inf_{\boldsymbol{u} \in\mathbb{S}^{d-1}} \dfrac{1}{\mu_0(\boldsymbol{u})} = q-p,
\end{align*}
where we used $\mu_0(\boldsymbol{u}) = \|\boldsymbol{u}\|_1$. On the other hand, by Theorem \ref{Thm:TimeConstLipschitz},
\begin{align*}
d_{\mathcal{H}}(\mathcal{B}_p, \mathcal{B}_q) &\leq \dfrac{K(q-p)}{q} \cdot \sup_{\boldsymbol{u} \in\mathbb{S}^{d-1}} \dfrac{1}{\mu_p(\boldsymbol{u})} 
 \leq \dfrac{K(q-p)}{p_0} \cdot \sup_{\boldsymbol{u} \in\mathbb{S}^{d-1}} \dfrac{1}{\mu_{q_0}(\boldsymbol{u})}.
\end{align*}
\end{proof}

\section{A remark on the intersection of geodesics} \label{sec:Intersection}

Let $\boldsymbol{x} \in \mathbb{Z}^d$. 
The proof of (4.10) in Auffinger, Damron and Hanson \cite{AuffingerDamronHanson50years} contains the following inequality:
\begin{align}
  E_p\left[ \# \underline{\text{\scshape{Geo}}} (\boldsymbol{O},\boldsymbol{x})\right] \geq \frac{p}{1-p} \cdot E_p[\#\{ e \in \overline{\text{\scshape{Geo}}}(\boldsymbol{O},\boldsymbol{x}): t(e)=1 \} ]
  \label{ineq:AuffingerDamronHanson17(4.10)Bernoulli}
\end{align}
holds for {\it any} $p \in (0,1)$ (set $I=0$ and $\delta=1/4$ in that proof). Similarly to \eqref{ineq:FengWu09Key}, we can deduce 
that
\begin{align}
E_p\left[ \# \underline{\text{\scshape{Geo}}} (\boldsymbol{O},\boldsymbol{x})\right] \geq \frac{p}{1-p} \cdot E_p[T(\boldsymbol{O},\boldsymbol{x})].
 \label{ineq:AuffingerDamronHanson17(4.10)Bernoulli_2}
\end{align}

For $p \in [0,p_c)$, it follows from \eqref{ineq:AuffingerDamronHanson17(4.10)Bernoulli_2}, \eqref{ineq:ShotestGeoUpperBound} and \eqref{def:TimeConstGen} that there exists a constant $C=C(p)>0$ such that
\begin{align}
\dfrac{p}{1-p} \cdot \mu_p( \boldsymbol{x})n \leq E_p\left[ \# \underline{\text{\scshape{Geo}}} (\boldsymbol{O},n\boldsymbol{x})\right] \leq  C n\|  \boldsymbol{x} \|_1
\label{ineq:AuffingerDamronHanson17(4.10)BernoulliTwoSided}
\end{align}
for any $\boldsymbol{x} \in \mathbb{Z}^d$ and $n \in \mathbb{N}$. Nakajima \cite{Nakajima19JSP} proves that $E_p\left[ \# \underline{\text{\scshape{Geo}}} (\boldsymbol{O},n\boldsymbol{e}_1)\right]$ is of order $n$ if $t(e)$ is square-integrable and satisfies an additional condition called ``useful". Our proof of \eqref{ineq:AuffingerDamronHanson17(4.10)BernoulliTwoSided} based on \eqref{ineq:AuffingerDamronHanson17(4.10)Bernoulli_2} gives a simple alternative proof for the Bernoulli case. In view of \eqref{ineq:AuffingerDamronHanson17(4.10)Bernoulli}, we can see that $E_p[\#\{ e \in \overline{\text{\scshape{Geo}}}(\boldsymbol{O},n\boldsymbol{x}): t(e)=1 \} ]$ is also of order $n$, which is a key for proving that $E_p[\overline{\text{\scshape{Geo}}}(\boldsymbol{O},n\boldsymbol{x})]$ is of order $n$ (See Theorem 2 in Zhang \cite{Zhang06PTRF}, Theorem 4.12 in Auffinger, Damron and Hanson \cite{AuffingerDamronHanson50years}, and Theorem 3 in Nakajima \cite{Nakajima19JSP}).

We emphasize that \eqref{ineq:AuffingerDamronHanson17(4.10)Bernoulli_2} gives an interesting information about the critical case. Let $\partial S(n):=\{ \boldsymbol{x} \in \mathbb{Z}^d : \|\boldsymbol{x}\|_{\infty}= n \}$, and the first-passage time from $\boldsymbol{O}$ to $\partial S(n)$ is denoted by $T(\boldsymbol{O},\partial S(n))$. Under the hypothesis
\begin{align}
P_{p_c} 
\left(
\begin{array}{@{\,}c@{\,}}
\text{there exists an infinite self-avoiding path}\\
\text{of edges $e$ with $t(e)=0$} \\
\end{array}
\right)=0,
\label{hypothesis:NoPercolationAtCriticality}
\end{align}
we have
\begin{align*}
P_{p_c} \left(\lim_{n \to \infty} T(\boldsymbol{O},\partial S(n)) = + \infty\right)=1,
\end{align*}
and \eqref{ineq:AuffingerDamronHanson17(4.10)Bernoulli_2} implies that
\begin{align*}
\lim_{n \to \infty} E_{p_c}\left[ \# \underline{\text{\scshape{Geo}}} (\boldsymbol{O},n\boldsymbol{x})\right] = +\infty.
\end{align*}
This suggests that the size of the intersection of geodesics can be large even if $p=p_c$. The verification of the hypothesis \eqref{hypothesis:NoPercolationAtCriticality} for all $d \geq 2$ is a long-standing problem in the percolation theory. At the moment \eqref{hypothesis:NoPercolationAtCriticality} is shown to be true for $d=2$ and for sufficiently large $d$ (see \cite{HeydenreichRemco17Book} and \cite{Grimmett99Book}). 
Chayes, Chayes, and Durrett \cite{ChayesChayesDurrett86JSP} prove that there exist constants $C_1,C_2>0$ such that
\[ C_1 \log n \leq E_{p_c} [ T(\boldsymbol{O},\partial S(n))] \leq C_2 \log n \quad \mbox{for $n \geq 3$}. \]
On the other hand, Zhang \cite{Zhang99Double} shows that for sufficiently large $d$, there exists a constant  $C>0$ such that
\[E_{p_c}[T(\boldsymbol{O},\partial S(n))]  \geq C \log \log n \quad \mbox{for $n \geq 3$}. \]
Those results give lower bounds of $E_{p_c}\left[ \# \underline{\text{\scshape{Geo}}} (\boldsymbol{O},n\boldsymbol{x})\right]$. One of important future problems is to give the precise order for 
$E_{p_c}\left[ \# \underline{\text{\scshape{Geo}}} (\boldsymbol{O},n\boldsymbol{x})\right]$ and $E_{p_c}\left[ \# \overline{\text{\scshape{Geo}}} (\boldsymbol{O},n\boldsymbol{x})\right]$. It is proved in Damron and Tang \cite{DamronTang19NewmanVolume} that when $d=2$, there exist constants $c>0$ and $s>1$ such that 
for any $\boldsymbol{x} \in \mathbb{Z}^2 \setminus \{ \boldsymbol{O}\} $, 
\[
P_{p_c}(N_{\boldsymbol{O},\boldsymbol{x}} \leq \| \boldsymbol{x}\|_1^s ) \leq \dfrac{1}{c}e^{-\| \boldsymbol{x}\|_1^c},
\]
where $N_{\boldsymbol{O},\boldsymbol{x}}$ is the minimal number of edges in any geodesic from $\boldsymbol{O}$ to $\boldsymbol{x}$.




\end{document}